\newcommand{\f}{\mathbb{F}}
\newcommand{\z}{\mathbb{Z}}
\newcommand{\tfjm}{$\mathbb{TFJM}^2$}
\newtheorem{thm}{Théorème}
\newtheorem{lem}[thm]{Lemme}
\newtheorem{pro}[thm]{Proposition}
\newtheorem*{rem}{Remarque}
\begin{document}

\selectlanguage{french}

\title{Un r\'esultat de recherche obtenu gr\^ace \`a des lyc\'eens}
\author{Thomas Budzinski \and Giancarlo Lucchini Arteche}
\date{}

\thanks{Nous tenons \`a remercier Ramon Moreira Nunes pour avoir attir\'e notre attention sur cette question pendant l'organisation du \tfjm\, 2014, ainsi que pour avoir servi de contact avec Emmanuel Kowalski par la suite. Nous remercions aussi ce dernier pour son soutien et pour avoir point\'e les autres preuves existantes.}

\maketitle

\begin{abstract}
Nous pr\'esentons une d\'emonstration \'el\'ementaire d'une question pos\'ee par Fouvry, Kowalski et Michel sur les solutions de l'\'equation $x+\frac{1}{x}+y+\frac{1}{y}+t=0$ sur un corps fini, en utilisant des r\'esultats obtenus par des lyc\'eens lors de leur participation au Tournoi Fran\c cais des Jeunes Math\'ematiciennes et Math\'ematiciens.
\end{abstract}

\section{Introduction}
Le Tournoi Fran\c cais des Jeunes Math\'ematiciennes et Math\'ematiciens \tfjm\, est un tournoi de math\'ematiques destin\'e aux \'el\`eves de lyc\'ee qui existe depuis 2011. Il se distingue d'autres comp\'etitions comme les Olympiades de math\'ematiques en proposant des probl\`emes ouverts dont les \'enonc\'es sont connus \`a l'avance et en \'etant organis\'e par \'equipes. Guid\'ees par des encadrants, les \'equipes compos\'ees de 4, 5 ou 6 \'el\`eves ont environ trois mois pour r\'efl\'echir aux probl\`emes propos\'es. Lors du tournoi, les différentes \'equipes pr\'esentent leurs r\'esultats et en discutent avec les autres \'equipes sous la forme de ``joutes’’ math\'ematiques (voir l’appendice par Matthieu Lequesne et le deuxi\`eme auteur). Les probl\`emes propos\'es sont inhabituels pour la plupart des \'el\`eves, car ils n'admettent pas, \`a la connaissance du jury, de solution compl\`ete. Pour les \'equipes, il s'agit donc de comprendre le probl\`eme, de r\'esoudre des cas particuliers, de rep\'erer les difficult\'es\ldots \\

Le but de ce texte est de montrer \`a quel point le \tfjm\, peut amener les \'el\`eves de lyc\'ee à entreprendre une d\'emarche de recherche scientifique. Cette histoire date de l’\'edition 2014 et commence par un post dans le blog math\'ematique d'Emmanuel Kowalski, dat\'ee du 29 juin 2012.\footnote{\url{https://blogs.ethz.ch/kowalski/?s=isogenies}} Celui-ci, intitul\'e ``A bijective challenge'', commen\c cait comme suit :\\

\selectlanguage{english}
\it Étienne Fouvry, Philippe Michel and myself have just finished a new paper,\footnote{Pour les curieux, il s'agit de l'article \cite{Kowalski}.} which is available on my web page and will soon be also on arXiv. {\rm [\ldots]} Today I want to discuss a by-product that we found particularly nice (and amusing). It can be phrased as a rather elementary-looking challenge: given a prime number $p$, and an element $t$ of $\z/p\z$ which is neither 0, 4 or -4 modulo $p$, let $N_p(t)$ be the number of solutions $(x,y)\in (\z/p\z-\{0\})^2$ of the congruence
\[x+\frac{1}{x}+y+\frac{1}{y}+t=0.\]
The challenge is to prove, bijectively if possible, that
\[N_p(t)=N_p\left(\frac{16}{t}\right).\]
{\rm [\ldots]} This sounds simple and elegant enough that an elementary proof should exist, but our argument is a bit involved. First, the number $N_p(t)$ is the number of points modulo $p$ of the curve with equation above, whose projective (smooth) model is an elliptic curve, say $E_t$, over $\f_p$. Then we checked using Magma that $E_t$ and $E_{16/t}$ are isogenous over $\f_p$, and this is well-known to imply that the two curves have the same number of points modulo $p$.

{\rm [\ldots]} The best explanation of this has probably to do with the relation between the family of elliptic curves and the modular curve $Y_0(8)$ (a relation whose existence follows from Beauville’s classification of stable families of elliptic curves over $\mathbb{P}^1$ with four singular fibers, as C.~Hall pointed out), but we didn’t succeed in getting a proof of all our statements using that link. In fact, we almost expected to find the identities above already spelled out in some corner or other of the literature on modular curves and universal families of elliptic curvers thereon, but we did not find anything.\\

\rm
\selectlanguage{french}
Cette question de Fouvry, Kowalski et Michel a \'et\'e reprise pour l’\'edition 2014 du \tfjm, ainsi que pour sa version internationale, l’ITYM. Bien entendu, on ne peut pas lancer des \'el\`eves de lyc\'ee directement sur une question ouverte comme celle-ci. C’est pourquoi l’\'enonc\'e a \'et\'e modifi\'e de la fa\c con suivante pour amener les \'el\`eves \`a la question de ces chercheurs sans pour autant la leur poser frontalement. Voici l’\'enonc\'e pr\'ecis de ce probl\`eme lors du \tfjm :\\

\it Soit $p$ un nombre premier impair. On consid\`ere l’ensemble $\f_p=\z/p\z$ des entiers modulo $p$. On note $\f_p^*$ l’ensemble des \'el\'ements inversibles de cet ensemble, {\it i.e.}~les $x\in\f_p$ tels qu’il existe $y\in\f_p$ v\'erifiant $xy\equiv 1\pmod p$. Pour $x\in \f_p^*$, on note $\frac{1}{x}$ un tel \'el\'ement $y$ (remarquer qu’il est unique dans $\f_p$).

Dans ce probl\`eme, on veut \'etudier l’\'equation
\[x+\frac{1}{x}+y+\frac{1}{y}=t,\]
o\`u $x,y\in\f_p^*$ et $t\in\f_p$. On note $N_p(t)$ le nombre de solutions en $x$ et $y$ pour un $t$ fix\'e.
\begin{enumerate}
\item
Quel est le cardinal de l’ensemble $A:=\{x+\frac{1}{x}|x\in\f_p^*\}$ ? Et celui des ensembles
\[A+A:=\{a+b\,|\,a,b\in A\}\quad\text{et}\quad A\cdot A=\{a\cdot b\,|\,a,b\in A\}\, ?\]
On pourra commencer par les calculer explicitement pour des petites valeurs de $p$.
\item Calculer $N_p(0)$.
\item Pour des petites valeurs de $p$, calculer explicitement $N_p(t)$ pour tout $t$.
\item Existe-t-il des couples $(a,b)$ avec $a\in\f_p^*$ et $b\in\f_p$ tels que $N_p(t)=N_p(at+b)$ pour tout $t\in\f_p$ ? Existe-t-il $a\in\f_p^*$ tel que $N_p(t)=N_p(a\frac{1}{t})$ pour tout $t\in\f_p^*$ ? \'Etudier d’autres transformations possibles de $t$ laissant stable la valeur de $N_p$.
\item G\'en\'eraliser ces r\'esultats, \`a $\z/n\z$ pour $n$ quelconque (on pourra commencer avec les puissances des nombres premiers), puis aux corps finis quelconques.\\
\end{enumerate}

\rm Comme on pourra le remarquer, on s’est permis de poser des questions {\it a priori} plus difficiles que celle pos\'ee par Fouvry-Kowalski-Michel (qui est cach\'ee dans la question 4). Ceci est un facteur commun de tous les probl\`emes pos\'es au \tfjm, le but \'etant que le probl\`eme soit ouvert m\^eme pour les jurys qui \'evalueront les \'el\`eves lors du tournoi (ces jurys sont compos\'es de chercheurs, doctorants et \'etudiants de master ou de grandes \'ecoles).\\

Ce qui fait l’int\'er\^et de ce probl\`eme en particulier, c’est qu’il est issu d’une question pos\'ee par des vrais chercheurs en math\'ematiques \`a d'autres chercheurs, donc toute solution devient {\it ipso facto} un r\'esultat de recherche (certes pas de la plus haute importance, mais de la vraie recherche tout de m\^eme). Et c’est en mettant ensemble les diff\'erentes solutions des \'el\`eves qui ont particip\'e au \tfjm\, 2014 que l’on a r\'eussi \`a trouver une r\'eponse \'el\'ementaire \`a la question de Fouvry-Kowalski-Michel. Gr\^ace au \tfjm, des lyc\'eens fran\c cais ont donc fait un v\'eritable petit apport \`a la recherche math\'ematique. On pr\'esente leurs arguments dans ce qui suit.

\section{La preuve}

\subsection{\'Enonc\'e du th\'eor\`eme et r\'esultats pr\'eliminaires}\label{section preliminaires}

Dans toute la suite $p$ désigne un nombre premier impair. Pour tout $t \in \mathbb{F}_p$, on note $M_p(t)$ le nombre de solutions dans $\mathbb{F}_p^*$ de l'équation
\begin{equation}\label{unevariable}
x+\frac{1}{x}=t,
\end{equation}
et $N_p(t)$ le nombre de solutions dans $(\mathbb{F}_p^*)^2$ de l'équation
\begin{equation}\label{deuxvariables}
x+\frac{1}{x}+y+\frac{1}{y}=t.
\end{equation}

Notre but est de montrer le résultat suivant.

\begin{thm}\label{thm}
Pour tout $t \in \mathbb{F}_p^*$, on a $N_p \left( \frac{16}{t} \right)=N_p(t)$.
\end{thm}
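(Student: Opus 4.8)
Le plan est de ramener le théorème à une identité entre sommes de caractères. Notons $\left(\frac{\cdot}{p}\right)$ le symbole de Legendre, avec la convention $\left(\frac{0}{p}\right)=0$. Comme l'équation \eqref{unevariable} se réécrit $x^2-tx+1=0$, son discriminant vaut $t^2-4$ et l'on obtient
\[
M_p(t)=1+\left(\frac{t^2-4}{p}\right).
\]
D'autre part, toute solution de \eqref{deuxvariables} correspond à une décomposition $t=a+b$ avec $a=x+\frac1x$ et $b=y+\frac1y$, d'où la convolution additive $N_p(t)=\sum_{a\in\f_p}M_p(a)\,M_p(t-a)$. En développant $\bigl(1+\left(\frac{a^2-4}{p}\right)\bigr)\bigl(1+\left(\frac{(t-a)^2-4}{p}\right)\bigr)$ et en invoquant l'évaluation classique $\sum_{a\in\f_p}\left(\frac{a^2-4}{p}\right)=-1$ (somme de Legendre d'un trinôme de discriminant non nul), je trouverais
\[
N_p(t)=p-2+S(t),\qquad S(t):=\sum_{a\in\f_p}\left(\frac{(a^2-4)\,((t-a)^2-4)}{p}\right).
\]
Il suffira donc de démontrer que $S\!\left(\frac{16}{t}\right)=S(t)$ pour tout $t\in\f_p^*$.

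J'interpréterais ensuite $S(t)$ géométriquement : à une constante additive près, c'est le nombre de points sur $\f_p$ de la courbe $y^2=(a^2-4)((t-a)^2-4)$, un modèle de la courbe elliptique $E_t$ évoquée dans l'introduction, dont les quatre points de branchement sont $2,-2,t+2,t-2$. (Au passage, la symétrie immédiate $N_p(-t)=N_p(t)$ s'obtient en changeant $(x,y)$ en $(-x,-y)$.) Un calcul direct montre que le birapport de ces quatre points vaut $\lambda_t=1-\frac{16}{t^2}$, tandis que celui associé à $\frac{16}{t}$ vaut $\lambda_{16/t}=1-\frac{t^2}{16}=\frac{\lambda_t}{\lambda_t-1}$. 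Les deux birapports appartiennent ainsi à la même orbite sous le groupe anharmonique $\mathfrak{S}_3$, ce qui signifie que $E_t$ et $E_{16/t}$ ont le même invariant $j$ et sont isomorphes sur $\overline{\f_p}$ : c'est l'explication structurelle de l'identité cherchée.

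Pour conclure, le point crucial sera de transformer cet isomorphisme sur $\overline{\f_p}$ en une égalité \emph{exacte} des comptages sur $\f_p$. L'idée est d'exhiber une homographie explicite $a\mapsto\varphi(a)$, définie sur $\f_p$, réalisant la permutation des points de branchement correspondant à $\lambda\mapsto\frac{\lambda}{\lambda-1}$, puis d'effectuer le changement de variable $a=\varphi(a')$ dans la somme $S(t)$. Sous une telle substitution, l'argument du symbole de Legendre se trouve multiplié par un facteur du type $\left(\frac{\ell(a')}{p}\right)$ issu des dénominateurs et du coefficient dominant, et la difficulté principale sera de vérifier que ce \emph{twist} est trivial, de sorte que $S(t)$ et $S\!\left(\frac{16}{t}\right)$ coïncident après réindexation. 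L'obstacle est double : bien que chaque point de branchement fasse intervenir une racine carrée d'une expression en $t$, leur permutation — et donc l'homographie $\varphi$ — devra rester rationnelle sur $\f_p$ ; et le décompte du facteur quadratique résiduel devra être mené avec soin, en traitant à part les valeurs de $a$ où $\varphi$ a un pôle ainsi que le point à l'infini. C'est précisément ce contrôle élémentaire du twist qui remplacerait l'argument d'isogénie de Fouvry–Kowalski–Michel.
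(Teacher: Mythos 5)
Votre proposition retrouve correctement les réductions préliminaires de l'article (la formule $M_p(a)=1+\left(\frac{a^2-4}{p}\right)$, la convolution $N_p(t)=\sum_{a}M_p(a)M_p(t-a)$, puis l'identité $N_p(t)=p-2+S(t)$, qui est exacte), mais la suite n'est qu'un plan dont l'étape décisive manque. Après le changement de variable $a=\varphi(a')$ par l'homographie envoyant les points de branchement $\{2,-2,t-2,t+2\}$ sur $\{2,-2,\frac{16}{t}-2,\frac{16}{t}+2\}$ (homographie qui est bien définie sur $\f_p$, puisque tous ces points sont rationnels), le dénominateur apparaît à la puissance quatrième et sort donc du symbole de Legendre ; il reste exactement une constante $c\in\f_p^*$, issue des coefficients dominants, telle que $S(t)=\left(\frac{c}{p}\right)S\left(\frac{16}{t}\right)$ à des termes de bord près. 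Démontrer que $\left(\frac{c}{p}\right)=+1$ n'est pas une vérification de routine que l'on peut renvoyer à plus tard : c'est tout le contenu du théorème. L'égalité des birapports ne fournit qu'un isomorphisme sur $\overline{\f_p}$, et deux courbes elliptiques sur $\f_p$ de même invariant $j$ peuvent être des tordues quadratiques non triviales l'une de l'autre, auquel cas on aurait $S\left(\frac{16}{t}\right)=-S(t)$ et les nombres de points seraient différents. Votre texte nomme cette difficulté (\og vérifier que ce twist est trivial \fg) mais ne donne aucun argument pour la lever ; en l'état, vous n'obtenez que $S\left(\frac{16}{t}\right)=\pm S(t)$, le signe pouvant \emph{a priori} dépendre de $p$ et de $t$.

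Pour comparaison, la preuve de l'article contourne entièrement cette question de torsion quadratique : elle établit d'abord la congruence $N_p\left(\frac{16}{t}\right)\equiv N_p(t)\pmod p$ par un développement à la Chevalley--Warning de $(x^2y+xy^2+x+y-txy)^{p-1}$, puis conclut à l'égalité exacte en combinant la majoration $N_p(t)\le N_p(0)=2p-4$ et un lemme de parité (la différence, multiple de $p$ compris entre $-p$ et $p$ et de parité paire, est nulle). Si vous calculiez explicitement $\varphi$ et la constante $c$, et montriez que $c$ est un carré uniformément en $p$ et $t$ (en traitant aussi les pôles de $\varphi$ et le point à l'infini), votre approche donnerait une preuve plus structurelle, essentiellement une version élémentaire de l'argument d'isomorphisme/isogénie de Fouvry--Kowalski--Michel ; mais cette étape, qui est le c\oe ur du problème, reste à faire.
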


La preuve du r\'esultat consiste en un comptage plus ou moins explicite des solutions. On \'etablit d'abord une borne sup\'erieure sur les quantit\'es $N_p(t)$, puis on montre que les quantit\'es $N_p \left( \frac{16}{t} \right)$ et $N_p(t)$ sont congrues modulo $p$. Ayant toutes ces informations, un simple argument de parit\'e conclut l'affaire.\\

On commence par donner une formule relativement explicite pour $N_p(t)$ : on a
\begin{equation} \label{formuleN}
N_{p}(t)=\sum_{a\in \mathbb{F}_p} M_{p}(a) M_{p}(t-a).
\end{equation}
On cherche donc \`a calculer explicitement les $M_p(a)$.

L'équation \eqref{unevariable} pour $t=a$ est équivalente à $x^{2}-ax+1=0$, qui est une équation quadratique de discriminant $a^2-4$. Elle a donc une solution si $a^2-4=0$, deux si $a^2-4$ est un carré et aucune si $a^2-4$ n'est pas un carré. On rappelle donc la d\'efinition du symbole de Legendre :
\[\left( \frac{a}{p} \right)=\begin{cases}
0 &\qquad\text{si } a\equiv 0\pmod p,\\
+1 &\qquad\text{si } a \text{ est un carr\'e non nul modulo } p,\\
-1 &\qquad\text{sinon.} 
\end{cases}\]
Sachant que le groupe multiplicatif $\f_p^*$ est cyclique d'ordre $p-1$, ce symbole peut aussi \^etre calcul\'e par ce qu'on appelle le crit\`ere d'Euler :
\[\left( \frac{a}{p} \right)\equiv a^{\frac{p-1}{2}}\pmod p.\]
\`A l'aide de ce symbole, on voit que l'on peut exprimer les valeurs de $M_p$ comme suit :
\begin{equation}\label{formuleM}
M_p(a)=1+\left( \frac{a^2-4}{p} \right).
\end{equation}

Ces formules nous permettent de prouver deux r\'esultats élémentaires sur les valeurs $N_p(t)$. Le premier est la borne sup\'erieure annonc\'ee plus haut, le deuxi\`eme concerne la parit\'e de ces valeurs, ce qui sera utile \`a la fin de la preuve.

\begin{lem} \label{majoration}
Pour tout $t \in \mathbb{F}_p$ on a
\[N_p(t) \leq N_p(0)=2p-4.\]
\end{lem}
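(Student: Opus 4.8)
The plan is to read off everything from the convolution formula \eqref{formuleN}, namely $N_p(t)=\sum_{a\in\f_p} M_p(a)M_p(t-a)$, together with the explicit expression \eqref{formuleM}. The first observation I would make is that $M_p$ is an \emph{even} function: since \eqref{formuleM} shows that $M_p(a)$ depends only on $a^2-4$, we have $M_p(-a)=M_p(a)$ for every $a$. Consequently $M_p(t-a)=M_p(a-t)$, and the sum defining $N_p(t)$ is exactly the autocorrelation of the function $M_p$ evaluated at the point $t$.

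From here the inequality is a one-line application of the Cauchy--Schwarz inequality:
\[
N_p(t)=\sum_{a\in\f_p} M_p(a)\,M_p(a-t)\le\Big(\sum_{a\in\f_p} M_p(a)^2\Big)^{1/2}\Big(\sum_{a\in\f_p} M_p(a-t)^2\Big)^{1/2}=\sum_{a\in\f_p} M_p(a)^2,
\]
where the last equality reindexes $a\mapsto a+t$ in the second factor. On the other hand, taking $t=0$ and using $M_p(-a)=M_p(a)$ gives $N_p(0)=\sum_a M_p(a)M_p(-a)=\sum_a M_p(a)^2$, so the right-hand side above is precisely $N_p(0)$. This yields $N_p(t)\le N_p(0)$ at once.

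It then remains to evaluate $N_p(0)=\sum_{a\in\f_p}M_p(a)^2$. By \eqref{formuleM} the value $M_p(a)$ lies in $\{0,1,2\}$, and it equals $1$ exactly when $a^2-4=0$, i.e.~for the two values $a=\pm2$. Writing $n_2$ for the number of $a$ with $M_p(a)=2$, I would use the fact that $\sum_{a}M_p(a)=p-1$ (each $x\in\f_p^*$ contributes to exactly one value $a=x+\frac1x$) to obtain $2\cdot1+2n_2=p-1$, hence $n_2=\frac{p-3}{2}$. Therefore $\sum_a M_p(a)^2=2\cdot1^2+n_2\cdot2^2=2+2(p-3)=2p-4$, which completes the proof.

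The genuinely substantive point is the recognition that $N_p$ is an autocorrelation, which is what forces its maximum to occur at $t=0$; once this is seen, both the inequality and the bookkeeping for $N_p(0)$ are routine. The only place demanding a little care is the exact count of the level sets of $M_p$, and in particular the identification of $a=\pm2$ as the unique points where $M_p=1$. If one prefers to avoid this counting, an alternative is to compute $N_p(0)$ directly by parametrizing the solutions of \eqref{deuxvariables} at $t=0$: the equation $y+\frac1y=-(x+\frac1x)$ is solved by $y=-x$ and $y=-\frac1x$, two solutions that coincide exactly when $x=\pm1$, again giving $2(p-3)+2=2p-4$.
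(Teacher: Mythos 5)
Your proof is correct and follows essentially the same route as the paper: Cauchy--Schwarz applied to the convolution formula \eqref{formuleN}, combined with the parity of $M_p$ to identify the right-hand side as $N_p(0)$, then the same count of the level sets of $M_p$ to get $2p-4$. The alternative direct parametrization of the solutions at $t=0$ via $y=-x$ or $y=-\frac1x$ is a nice optional check but does not change the argument.
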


\begin{proof}
En appliquant l'inégalité de Cauchy-Schwarz à \eqref{formuleN} on obtient
\begin{align*}
N_p(t) &\leq \sqrt{\left( \sum_{a\in \mathbb{F}_p} M_p(a)^2 \right) \left( \sum_{a\in \mathbb{F}_p} M_p(t-a)^2 \right)}\\
&=\sum_{a \in \mathbb{F}_p} M_p(a)^2=\sum_{a \in \mathbb{F}_p} M_p(a) M_p(-a)=N_p(0),
\end{align*}
où $M_p(a)=M_p(-a)$ d'après \eqref{formuleM}.

Pour calculer $N_p(0)$, on étudie la répartition des valeurs de $M_p$. D'une part, la formule \eqref{formuleM} nous dit que $M_p(2)=M_p(-2)=1$ et $M_p$ vaut $0$ ou $2$ partout ailleurs. D'autre part on a
\[ \sum_{a \in \mathbb{F}_p} M_p(a)=p-1,\] donc $M_p$ prend $\frac{p-1}{2}$ fois la valeur $0$, deux fois la valeur $1$ et $\frac{p-3}{2}$ fois la valeur $2$. On obtient donc
\[N_p(0)=\sum_{a \in \mathbb{F}_p} M_p(a)^2=2\cdot 1+\frac{p-3}{2}\cdot 4=2p-4.\]
\end{proof}

\begin{lem} \label{parite}
$N_p(t)$ est impair si et seulement si $t \equiv \pm 4 \pmod{p}$.
\end{lem}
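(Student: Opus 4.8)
Le plan est de tout réduire modulo $2$ et d'exploiter qu'un produit d'entiers est impair si et seulement si ses deux facteurs le sont. En partant de la formule \eqref{formuleN}, j'écrirais
\[
N_p(t) = \sum_{a \in \mathbb{F}_p} M_p(a)\, M_p(t-a) \equiv \#\{a \in \mathbb{F}_p : M_p(a) \text{ et } M_p(t-a) \text{ sont impairs}\} \pmod 2,
\]
de sorte que la parité de $N_p(t)$ ne dépend que de l'ensemble des $a$ pour lesquels $M_p(a)$ est impair.

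Je commencerais donc par déterminer ces $a$. D'après \eqref{formuleM}, on a $M_p(a) = 1 + \left(\frac{a^2-4}{p}\right) \in \{0,1,2\}$, si bien que $M_p(a)$ est impair si et seulement s'il vaut $1$, c'est-à-dire si et seulement si $\left(\frac{a^2-4}{p}\right)=0$, soit $a^2 \equiv 4 \pmod p$. Comme $p$ est impair, cela équivaut à $a \in \{2,-2\}$, et ces deux valeurs sont distinctes.

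Le comptage ci-dessus se ramène alors au nombre de $a \in \{2,-2\}$ tels que de plus $t-a \in \{2,-2\}$, c'est-à-dire au nombre de couples $(a,b) \in \{2,-2\}^2$ vérifiant $a+b=t$. Les sommes possibles sont $4$ (pour $(2,2)$), $-4$ (pour $(-2,-2)$) et $0$ (pour $(2,-2)$ et $(-2,2)$). Puisque $p$ est impair, les trois valeurs $0, 4, -4$ sont deux à deux distinctes modulo $p$ ; on en déduit que si $t \equiv \pm 4$ il existe exactement un tel couple et $N_p(t)$ est impair, que si $t \equiv 0$ il en existe exactement deux et $N_p(t)$ est pair, et que pour toute autre valeur de $t$ il n'y en a aucun, donc $N_p(t)$ est pair. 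C'est précisément l'énoncé voulu.

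Le cœur de l'argument réside dans la réduction modulo $2$, après quoi il n'y a pas de véritable obstacle : le seul point demandant un peu d'attention, tout élémentaire qu'il soit, est d'écarter les coïncidences parasites pour les petits nombres premiers, à savoir vérifier que $2 \not\equiv -2$ et que $0, 4, -4$ restent distincts modulo $p$ — ce qui est assuré par le fait que $p$ est un nombre premier impair (donc $p \nmid 4$ et $p \nmid 8$).
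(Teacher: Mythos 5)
Votre preuve est correcte et suit essentiellement la même démarche que celle de l'article : on réduit la somme \eqref{formuleN} modulo $2$ et on identifie les termes impairs comme ceux pour lesquels $a$ et $t-a$ appartiennent tous deux à $\{2,-2\}$, ce qui ne se produit que pour $t \equiv \pm 4$. La seule différence, mineure, est que vous traitez le cas $t \equiv 0$ directement (deux couples $(2,-2)$ et $(-2,2)$, donc contribution paire) là où l'article invoque le lemme \ref{majoration} via $N_p(0)=2p-4$.
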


\begin{proof}
Le cas $t=0$ découle immédiatement du lemme \ref{majoration}. Supposons $t \ne 0$ et soit $a \in \mathbb{F}_p$ : si $M_p(a) M_p(t-a)$ est impair, alors $a, t-a \in \{-2,2\}$ d'apr\`es la formule \eqref{formuleM}, donc $(a,t)$ vaut $(2,4)$ ou $(-2,-4)$. Ainsi, si $t \notin \{-4,4\}$ alors tous les termes de la somme \eqref{formuleN} sont pairs donc $N_p(t)$ est pair. Si $t \in \{-4,4\}$ alors le terme correspondant à $a=\frac{t}{2}$ est impair et tous les autres sont pairs donc $N_p(t)$ est impair.
\end{proof}

\subsection{Un argument inspir\'e de la preuve de Chevalley-Warning}\label{section chevalley-warning}

Le th\'eor\`eme de Chevalley-Warning est un r\'esultat classique de la th\'eorie des corps finis.

\begin{thm}[Chevalley-Warning]
Soit $\f$ un corps fini de caract\'eristique $p$ et soit $P(x_1,\ldots,x_n)$ un polyn\^ome de degr\'e $d$ \`a coefficients dans $\f$. Alors, si $n>d$, le nombre de solutions de l'\'equation $P=0$ est congru \`a $0$ modulo $p$.

En particulier, si $(0,\ldots,0)$ est une solution (par exemple, si $P$ est homog\`ene), alors il existe une solution non triviale.
\end{thm}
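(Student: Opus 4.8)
Le plan est de reprendre la preuve classique reposant sur les sommes de puissances dans $\f$. Posons $q=|\f|$, qui est une puissance de $p$. L'idée de départ consiste à remplacer la fonction indicatrice des zéros de $P$ par un polynôme : d'après le petit théorème de Fermat pour les corps finis, on a $a^{q-1}=1$ pour tout $a\in\f^*$ et $0^{q-1}=0$, de sorte que $1-P(x)^{q-1}$ vaut $1$ si $P(x)=0$ et $0$ sinon. En notant $N$ le nombre de solutions, on obtient donc dans $\f$ l'égalité
\[N\cdot 1_{\f}=\sum_{x\in\f^n}\bigl(1-P(x)^{q-1}\bigr)=q^n-\sum_{x\in\f^n}P(x)^{q-1}=-\sum_{x\in\f^n}P(x)^{q-1},\]
puisque $q^n\equiv 0$ dans $\f$. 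Tout revient alors à montrer que cette dernière somme est nulle.

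Je commencerais par établir le lemme clé sur les sommes de puissances : pour tout entier $j\geq 0$, la somme $\sum_{x\in\f}x^j$ vaut $-1$ lorsque $j>0$ est divisible par $q-1$, et $0$ dans tous les autres cas (le cas $j=0$, avec la convention $0^0=1$, donnant $q\equiv 0$). Cela se démontre en utilisant que $\f^*$ est cyclique d'ordre $q-1$ : si $q-1$ ne divise pas $j$, un générateur $g$ vérifie $g^j\neq 1$, et la somme sur $\f^*$, qui est géométrique, s'annule.

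Ensuite, je développerais $P^{q-1}$ comme combinaison linéaire de monômes $x_1^{e_1}\cdots x_n^{e_n}$, chacun de degré total $e_1+\cdots+e_n\leq d(q-1)$. La somme correspondante sur $\f^n$ se factorise en
\[\sum_{x\in\f^n}x_1^{e_1}\cdots x_n^{e_n}=\prod_{i=1}^n\Bigl(\sum_{x_i\in\f}x_i^{e_i}\Bigr),\]
et d'après le lemme ce produit n'est non nul que si chaque $e_i$ est un multiple strictement positif de $q-1$, donc si $e_i\geq q-1$ pour tout $i$, ce qui impose $e_1+\cdots+e_n\geq n(q-1)$.

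Le point décisif, où l'hypothèse $n>d$ intervient de manière essentielle, est la comparaison des degrés : on a $e_1+\cdots+e_n\leq d(q-1)<n(q-1)$, ce qui contredit la minoration précédente. Aucun monôme ne contribue donc à la somme, celle-ci est nulle, et l'on conclut que $N\equiv 0\pmod p$. L'assertion sur l'existence d'une solution non triviale s'ensuit aussitôt : si $(0,\ldots,0)$ est une solution, le nombre total de solutions, divisible par $p$ et non nul, est au moins égal à $p\geq 2$, d'où l'existence d'une solution non triviale.
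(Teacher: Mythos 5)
Votre démonstration est correcte et suit exactement l'approche classique que l'article se contente de citer (premier chapitre du \emph{Cours d'arithmétique} de Serre) : indicatrice des zéros via $1-P^{q-1}$, lemme sur les sommes de puissances $\sum_{x\in\f}x^j$, factorisation monôme par monôme et comparaison des degrés $d(q-1)<n(q-1)$. C'est précisément la « description astucieuse de l'ensemble des solutions » utilisant le petit théorème de Fermat que le texte évoque puis adapte dans la section~\ref{section chevalley-warning}, où votre lemme clé réapparaît sous la forme énoncée pour $\f_p^*$.
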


La preuve de ce th\'eor\`eme, qui est bien classique (cf.~par exemple le premier chapitre de \cite{SerreArithmetique}), consiste en une description astucieuse de l'ensemble des solutions qui utilise le petit th\'eor\`eme de Fermat. La m\^eme technique nous permettra de d\'emontrer dans la suite le r\'esultat suivant :

\begin{pro}\label{Np(t) congru a Np(16/t)}
Pour tout $t \in \mathbb{F}_p^*$ on a $N_p \left( \frac{16}{t} \right) \equiv N_p(t) \pmod{p}$.
\end{pro}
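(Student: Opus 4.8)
The plan is to mimic the Chevalley--Warning argument by using Fermat's little theorem to turn $N_p(t)$ into a sum of monomials over $(\mathbb{F}_p^*)^2$. For $c\in\mathbb{F}_p$ one has $1-c^{p-1}=1$ if $c=0$ and $0$ otherwise, so, writing $Q_t(x,y)=x+\frac1x+y+\frac1y-t$, counting the solutions of \eqref{deuxvariables} gives
\[
N_p(t)=\sum_{x,y\in\mathbb{F}_p^*}\bigl(1-Q_t(x,y)^{p-1}\bigr)\equiv 1-\sum_{x,y\in\mathbb{F}_p^*}Q_t(x,y)^{p-1}\pmod p,
\]
since $(p-1)^2\equiv1$. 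The advantage of restricting to $\mathbb{F}_p^*$ is that $Q_t^{p-1}$ becomes a Laurent polynomial in $x,y$, and $\sum_{x\in\mathbb{F}_p^*}x^k\equiv-1$ when $(p-1)\mid k$ and $\equiv0$ otherwise. Expanding $Q_t^{p-1}$ by the multinomial theorem, only the monomials $x^iy^j$ with $i\equiv j\equiv0\pmod{p-1}$ survive the summation. Since $|i|,|j|\le p-1$, the surviving exponents are $0$ or $\pm(p-1)$; the four ``corner'' terms $(i,j)=(\pm(p-1),0),(0,\pm(p-1))$ contribute a constant independent of $t$, which drops out when comparing $t$ with $16/t$.

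Everything thus reduces to the $i=j=0$ (constant-term) contribution
\[
S_0(t)=\sum_{2\alpha+2\beta+c=p-1}\frac{(p-1)!}{(\alpha!)^2(\beta!)^2\,c!}\,(-t)^c,
\]
and the claim becomes $S_0(t)\equiv S_0(16/t)\pmod p$. I would first simplify $S_0$ modulo $p$. Writing the coefficient as $\binom{p-1}{2\alpha,2\beta,c}\binom{2\alpha}{\alpha}\binom{2\beta}{\beta}$ and using $\binom{p-1}{2\alpha,2\beta,c}\equiv(-1)^c\binom{2\alpha+2\beta}{2\alpha}$, the sign $(-1)^c$ cancels the one in $(-t)^c$; grouping the terms with $\alpha+\beta=s$ fixed and invoking the convolution identity $\sum_{\alpha+\beta=s}\binom{2s}{2\alpha}\binom{2\alpha}{\alpha}\binom{2\beta}{\beta}=\binom{2s}{s}^2$ (which follows from $\sum_\alpha\binom{s}{\alpha}^2=\binom{2s}{s}$), I expect to obtain
\[
S_0(t)\equiv\sum_{s=0}^{(p-1)/2}\binom{2s}{s}^2 t^{\,p-1-2s}\pmod p .
\]

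The heart of the matter is then the symmetry $t\mapsto 16/t$. Here I would use the classical congruence $\binom{2s}{s}\equiv(-4)^s\binom{(p-1)/2}{s}\pmod p$ to write $\binom{2s}{s}^2\equiv16^s\binom{(p-1)/2}{s}^2$, so that, setting $g(u)=\sum_{s=0}^{(p-1)/2}\binom{(p-1)/2}{s}^2u^s$ and using $t^{p-1}\equiv1$ for $t\in\mathbb{F}_p^*$, one gets $S_0(t)\equiv g(16/t^2)$ and likewise $S_0(16/t)\equiv g(t^2/16)$. The polynomial $g$ is palindromic (its coefficients are symmetric under $s\mapsto\frac{p-1}{2}-s$), whence $g(1/u)=u^{-(p-1)/2}g(u)$. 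Taking $u=t^2/16$, which is a nonzero square, Euler's criterion gives $u^{(p-1)/2}\equiv1$, so $g(16/t^2)=g(1/u)=g(u)=g(t^2/16)$, which is exactly the desired identity. The main obstacle is precisely this last block: carrying out the combinatorial simplification of $S_0$ cleanly and recognizing that, after the substitution $\binom{2s}{s}\equiv(-4)^s\binom{(p-1)/2}{s}$, the whole statement collapses to the palindromy of $g$ combined with Euler's criterion applied to the square $t^2/16$.
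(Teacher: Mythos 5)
Your argument is correct, and its first half is essentially the paper's: you use the Fermat indicator $1-Q_t^{p-1}$, expand by the multinomial theorem, kill all monomials except those with exponents divisible by $p-1$ via the power-sum lemma, observe that the four ``corner'' terms give a constant, and reduce everything to the constant-term sum over $2\alpha+2\beta+c=p-1$ (working with the Laurent polynomial instead of clearing denominators as the paper does is an immaterial variation, and your bookkeeping $N_p(t)\equiv-3-S_0(t)$ matches the paper's $-3+\sum t^{k_5}/(k_1!^2k_2!^2k_5!)$). Where you genuinely diverge is the endgame. The paper groups the terms by $k_5$, applies Vandermonde's identity $\sum_k\binom{n}{k}^2=\binom{2n}{n}$, and then compares $N_p(t)$ with $N_p(16/t)$ coefficient by coefficient after the substitution $k\leftrightarrow p-1-k$, reducing to the congruence $(p-1-k)!\left(\frac{k}{2}\right)!^2 4^k\equiv(-1)^{(p-1)/2}k!\left(\frac{p-1-k}{2}\right)!^2\pmod p$, which it proves by induction on $k$. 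You instead simplify the constant term to $\sum_s\binom{2s}{s}^2t^{p-1-2s}$, invoke the classical congruence $\binom{2s}{s}\equiv(-4)^s\binom{(p-1)/2}{s}\pmod p$ to rewrite it as $g(16/t^2)$ with $g(u)=\sum_s\binom{(p-1)/2}{s}^2u^s$, and conclude from the palindromy of $g$ together with Euler's criterion applied to the square $u=(t/4)^2$. Both finishes are elementary and of comparable length, but yours is more conceptual: it isolates exactly where the constant $16=(-4)^2$ comes from and replaces an unilluminating induction by a symmetry of an explicit polynomial, at the modest cost of needing the central-binomial congruence $\binom{2s}{s}\equiv(-4)^s\binom{(p-1)/2}{s}$ (itself a one-line computation with double factorials). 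All the identities you invoke check out, so the proposal stands as a complete proof.
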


\begin{proof}
L'équation \eqref{deuxvariables} est équivalente à
\begin{equation}\label{2var-deg3}
x^2y+xy^2+x+y-txy=0,\quad\text{avec}\quad (x,y) \in (\mathbb{F}_p^*)^2,
\end{equation}
donc $(x^2y+xy^2+x+y-txy)^{p-1}$ vaut $0$ si $(x,y)$ est solution et $1$ sinon, d'apr\`es le petit th\'eor\`eme de Fermat. On peut ainsi compter les couples non-solutions modulo $p$ :
\[(p-1)^2-N_p(t) \equiv \sum_{x, y \in \mathbb{F}_p^*} (x^2y+xy^2+x+y-txy)^{p-1} \pmod{p}.\]
En développant le polynôme et en remplaçant $t$ par $-t$ (la bijection \'evidente $(x,y)\mapsto (-x,-y)$ donne $N_p(t)=N_p(-t)$), on obtient que
\[N_p(t)\equiv 1-\sum_{x, y \in \mathbb{F}_p^*}\sum_{\sum k_i=p-1} \frac{(p-1)!}{k_1!...k_5!} (x^2y)^{k_1}(xy^2)^{k_2} x^{k_3} y^{k_4} (txy)^{k_5} \pmod p.\]
Et en rappelant que $(p-1)!\equiv -1\pmod p$ d'apr\`es le th\'eor\`eme de Wilson, on obtient
\begin{align*}
N_p(t) &\equiv 1+\sum_{\sum k_i=p-1}  \frac{t^{k_5}}{k_1!...k_5!} \sum_{x, y \in \mathbb{F}_p^*}  x^{2k_1+k_2+k_3+k_5} y^{k_1+2k_2+k_4+k_5},\\
&\equiv 1+\sum_{\sum k_i=p-1}  \frac{t^{k_5}}{k_1!...k_5!} \sum_{x, y \in \mathbb{F}_p^*} x^{p-1+k_1-k_4} y^{p-1+k_2-k_3},\\
&\equiv 1+\sum_{\sum k_i=p-1}  \frac{t^{k_5}}{k_1!...k_5!} \left( \sum_{x \in \mathbb{F}_p^*} x^{k_1-k_4} \right) \left( \sum_{y \in \mathbb{F}_p^*} y^{k_2-k_3} \right)\pmod p.
\end{align*}

On a maintenant besoin du lemme suivant, cl\'e dans la preuve de Chevalley-Warning.

\begin{lem}
Soit $a \in \mathbb{Z}$. Alors $\sum_{x \in \mathbb{F}_p^*} x^a$ vaut $-1$ si $p-1$ divise $a$, et $0$ sinon.
\end{lem}
\begin{proof}
Si $p-1$ divise $a$, chacun des $p-1$ termes de la somme vaut $1$ d'apr\`es le petit th\'eor\`eme de Fermat, d'où le résultat. Sinon, soit $g$ un générateur du groupe multiplicatif $\mathbb{F}_p^*$. On a
\[\sum_{x \in \mathbb{F}_p^*} x^a=\sum_{i=0}^{p-2} g^{ia}=\frac{g^{a(p-1)}-1}{g^a-1}=0.\]
\end{proof}

D'après le lemme, si le terme correspondant à $(k_1,...,k_5)$ est non nul, alors $k_1-k_4$ et $k_2-k_3$ sont divisibles par $p-1$, donc valent $0$, $p-1$ ou $1-p$. Si par exemple $k_1-k_4=p-1$, alors $k_1=p-1$ et $k_2=...=k_5=0$, et la contribution de ce terme vaut $\frac{1}{(p-1)!}(-1)(-1)=-1$, et de même lorsque $k_2$, $k_3$ ou $k_4$ vaut $p-1$. Dans tous les autres termes non nuls, on a $k_1=k_4$ et $k_2=k_3$ d'où
\begin{equation}\label{k1k2k5}
N_p(t) \equiv -3+\sum_{2k_1+2k_2+k_5=p-1} \frac{t^{k_5}}{k_1 !^2 k_2!^2 k_5!} \pmod{p}.
\end{equation}

\begin{rem}
C'est à partir d'ici que notre preuve diffère de celle du théorème de Chevalley-Warning : alors que l'hypothèse sur le degré dans Chevalley-Warning garantit que tous les termes non triviaux sont nuls, il reste ici un certain nombre de termes à contrôler, le degré de \eqref{2var-deg3} étant plus élevé que le nombre de variables.
\end{rem}

On cherche donc à regrouper les termes de \eqref{k1k2k5} selon la valeur de $k_5$. On constate que, $p-1$ étant pair, $k_5$ est toujours pair. Soit donc $k_5$ pair dans $[\![0, p-1]\!]$ fixé. On a
\begin{align}\label{somme a k5 fixe}
\begin{split}
\sum_{2k_1+2k_2=p-1-k_5} \frac{t^{k_5}}{k_1 !^2 k_2!^2 k_5!} &= \frac{t^{k_5}}{k_5! {\left( \frac{p-1-k_5}{2} \right)!}^2} \sum_{k_1+k_2=\frac{p-1-k_5}{2}} \binom{\frac{p-1-k_5}{2}}{k_1}^2\\
&=  \frac{t^{k_5}}{k_5! {\left( \frac{p-1-k_5}{2} \right)!}^2} \binom{p-1-k_5}{\frac{p-1-k_5}{2}},
\end{split}
\end{align}
où la dernière ligne utilise l'égalité $\sum_{k=0}^n \binom{n}{k}^2=\binom{2n}{n}$. C'est un cas particulier de l'identité de Vandermonde, qu'on peut obtenir par exemple par double-comptage.

Donc, en appliquant \eqref{k1k2k5} et \eqref{somme a k5 fixe} respectivement à $t$ et $\frac{16}{t}$, on a d'une part
\[N_p(t) \equiv -3+\underset{k\text{ pair}}{\sum_{k=0}^{p-1}} \frac{1}{k! {\left( \frac{p-1-k}{2} \right)!}^2} \binom{p-1-k}{\frac{p-1-k}{2}} t^k \pmod{p},\]
et d'autre part
\[N_p \left( \frac{16}{t} \right) \equiv -3+\underset{k\text{ pair}}{\sum_{k=0}^{p-1}} \frac{1}{k! {\left( \frac{p-1-k}{2} \right)!}^2} \binom{p-1-k}{\frac{p-1-k}{2}} \left( \frac{16}{t} \right)^{k} \pmod{p},\]
ce qui donne, en \'echangeant $k$ et $p-1-k$,
\[N_p \left( \frac{16}{t} \right) \equiv -3+\underset{k\text{ pair}}{\sum_{k=0}^{p-1}} \frac{1}{(p-1-k)! {\left( \frac{k}{2} \right)!}^2} \binom{k}{\frac{k}{2}} \frac{t^k}{16^k} \pmod{p}.\]

Pour en déduire $N_p \left( \frac{16}{t} \right) \equiv N_p(t) \pmod{p}$, il suffit donc de démontrer que pour tout $k$ pair dans $[\![0, p-1]\!]$ on a
\[\frac{1}{k! {\left(\frac{p-1-k}{2} \right)!}^2} \binom{p-1-k}{\frac{p-1-k}{2}} \equiv \frac{1}{(p-1-k)! {\left( \frac{k}{2} \right)!}^2} \binom{k}{\frac{k}{2}} \frac{1}{16^k} \pmod{p},\]
ou encore, en chassant les d\'enominateurs et en prenant des racines carr\'ees,
\[(p-1-k)! {\left( \frac{k}{2} \right)!}^2 4^k \equiv (-1)^{\frac{p-1}{2}} k! {\left( \frac{p-1-k}{2} \right)!}^2 \pmod{p},\]
ce qui se fait directement par r\'ecurrence sur $k$.
\end{proof}

\subsection{Le coup de grâce}\label{section coup de grace}

D'après la proposition \ref{Np(t) congru a Np(16/t)}, on a $N_p \left( \frac{16}{t} \right) \equiv N_p(t) \pmod{p}$, donc $N_p \left( \frac{16}{t} \right) - N_p(t)$ est divisible par $p$. De plus, on sait que $N_p(t)$ et $N_p \left( \frac{16}{t} \right)$ sont positifs et majorés par $N_p(0)=2p-4$ d'après le lemme \ref{majoration}, donc leur différence ne peut valoir que $-p$, $0$ ou $p$. Enfin, $N_p(t)$ est impair si et seulement si $t$ vaut $-4$ ou $4$ d'après le lemme \ref{parite}. Les quantités $N_p(t)$ et $N_p \left( \frac{16}{t} \right)$ ont alors la même parité et leur différence doit donc valoir $0$, d'où le résultat.

\section{Quelques commentaires sur la preuve}
Nous sommes partis des solutions pr\'esent\'ees par les deux \'equipes fran\c caises qui participaient au tournoi international, dont l'une \'etait encadr\'ee par le premier auteur. Ces \'equipes \'etaient donc les meilleures \'equipes du \tfjm\, 2014, et elles ont eu l'occasion d'échanger avec les autres participants à la compétition nationale. Cela fait de ces solutions un effort commun de plusieurs \'el\`eves de tr\`es haut niveau. 

Ainsi, les r\'esultats de la section \ref{section preliminaires} sont enti\`erement d\^us aux participants du tournoi. Ils ont \'et\'e modifi\'es dans ce texte pour gagner en brievet\'e et r\'epondre sp\'ecifiquement \`a l'\'enonc\'e original de Fouvry-Kowalski-Michel. Comme il a \'et\'e d\'emontr\'e dans la section \ref{section coup de grace}, ces r\'esultats suffisent pour r\'eduire la preuve de l'\'egalit\'e en une preuve de congruence modulo $p$. C'est l\`a que l'exp\'erience a jou\'e un r\^ole, puisque c'est leur encadrant ({\it i.e.}~le premier auteur) qui a vu que les id\'ees appliqu\'ees dans Chevalley-Warning pouvaient \^etre utiles pour d\'emontrer une telle affirmation. La section \ref{section chevalley-warning} a \'et\'e donc r\'edig\'ee par ses soins.

Il est cependant important de remarquer que le niveau des techniques utilis\'ees dans cette d\'emonstration reste tr\`es basique, y compris dans la section \ref{section chevalley-warning}. C'est pourquoi on ne doute aucunement qu'elle aurait \'et\'e \`a la port\'ee des \'el\`eves si jamais leur encadrant leur avait sugg\'er\'e, tel un directeur de th\`ese, d'aller lire la d\'emonstration de Chavelley-Warning pour y trouver des pistes. Malheureusement, cette id\'ee n'est venue \`a l'esprit du premier auteur qu'apr\`es le tournoi international.\\

Enfin, nous tenons \`a mentionner que, malgr\'e le fait d'avoir trouv\'e une preuve \'el\'ementaire \`a la question de Fouvry-Kowalski-Michel, le d\'efi n'est pas pour autant compl\`etement relev\'e. En effet, ces chercheurs demandent une preuve non seulement \'el\'ementaire, mais \emph{bijective} de l'\'enonc\'e du th\'eor\`eme \ref{thm}, alors que notre preuve \'etablit seulement que les deux ensembles ont le m\^eme cardinal. D'autres preuves non-bijectives ont d'ailleurs \'et\'e trouv\'ees avant celle du \tfjm, mais toujours avec des outils plus complexes. Il y a notamment :
\begin{itemize}
\item la preuve mentionn\'ee sur le blog de Kowalski, bas\'ee sur des calculs informatiques sur des courbes elliptiques ;
\item une preuve (non publi\'ee) par David Zywina, bas\'ee sur l'\'etude de certaines formes modulaires, comme il \'etait sugg\'er\'e \`a la fin du post de Kowalski ;
\item un r\'esultat encore plus g\'en\'eral sur les repr\'esentations $\ell$-adiques du groupe fondamental d'une courbe, d\^u \`a Deligne et Flicker (cf.~\cite{DeligneFlicker}), qui implique aussi l'\'enonc\'e.
\end{itemize}
Apr\`es les courbes elliptiques, les formes modulaires, les repr\'esentations $\ell$-adiques et Chevalley-Warning, \`a vous de trouver une meilleure preuve !

\appendix

\section{Quelques mots sur le \tfjm\\ (par Matthieu Lequesne et Giancarlo Lucchini Arteche)}

\subsection{Qu'est-ce que le \tfjm ?}
Le \textit{Tournoi Fran\c cais des Jeunes Math\'ematiciennes et Math\'ematiciens} (\tfjm) est une comp\'etition de math\'ematiques qui propose \`a des \'el\`eves de lyc\'ee de travailler par \'equipe sur une s\'erie de probl\`emes ouverts pendant plusieurs mois. Il est organis\'e par l'association Animath et par une grande \'equipe d'anciens participants, aujourd'hui tous en \'etudes sup\'erieures.\\

\subsubsection{Un peu d'histoire}
C'est David Zmiaikou, jeune math\'ematicien bi\'elorusse, qui est \`a l'origine du \tfjm, ainsi que de sa version internationale, l'ITYM (International Tournament of Young Mathematicians). Pendant ses ann\'ees en tant que doctorant \`a l'universit\'e Paris-Sud, il a eu l'id\'ee d'organiser un tournoi international \`a l'image des tournois de math\'ematiques en Bi\'elorussie. Avec l'aide de Bernardo da Costa, \'egalement doctorant \`a la facult\'e d'Orsay, et avec le soutien de l'association Animath, le premier tournoi international des jeunes math\'ematiciens a eu lieu en 2009 dans les installations de l'universit\'e avec la participation de six \'equipes provenant de quatre pays diff\'erents : Bi\'elorussie, Bulgarie, France et Russie.\\

En 2011, deux ans apr\`es le succ\`es du premier tournoi, un troisi\`eme doctorant, Igor Kortchemski, s'est ajout\'e \`a ce groupe pour cr\'eer la version fran\c caise du tournoi, en partie pour mieux pr\'eparer les \'equipes fran\c caises envoy\'ees \`a l'ITYM. C'est ainsi que le \tfjm\, est n\'e, accueillant pour sa premi\`ere \'edition quatre \'equipes venues de Paris, Versailles et Strasbourg.

Le \tfjm\, s'est bien d\'ev\'elopp\'e depuis. En effet, il accueillait d\'ej\`a dix-huit \'equipes en 2014 et, pour l'\'edition 2015, il a fait un grand saut en proposant pour la premi\`ere fois des tournois r\'egionaux aux quatre coins de la France.\\

\subsubsection{Le format}
Le \tfjm\, se compose aujourd'hui de deux \'etapes : des tournois r\'egionaux r\'epartis partout en France, puis une finale nationale \`a l'\'Ecole polytechnique et \`a l'ENSTA Paristech. Une liste contenant entre 8 et 12 probl\`emes est publi\'ee chaque ann\'ee en d\'ecembre. Ainsi les \'el\`eves peuvent regarder les probl\`emes pendant leurs vacances et commencer \`a travailler pendant le mois de janvier. Ils sont encadr\'es pour cela par des professeurs de lyc\'ee, mais aussi par des \'etudiants, doctorants ou chercheurs des universit\'es voisines. En 2017, des tournois r\'egionaux sont pr\'evus dans les villes de Lille, Lyon, Paris, Rennes, Strasbourg et Toulouse lors d'un week-end pendant le mois d'avril. La finale nationale aura lieu pour sa part au mois de mai.\\

Lors des diff\'erents tournois, les \'equipes s'affrontent par poules sous la forme de ``joutes math\'ematiques''. Dans celles-ci, chaque \'equipe occupe \`a tour de r\^ole trois fonctions :
\begin{itemize}
\item \emph{pr\'esenter la solution} d'un probl\`eme sur lequel ils ont travaill\'e ;
\item \emph{critiquer la solution} qui a \'et\'e pr\'esent\'ee : chercher les impr\'ecisions, demander des explications et lancer un d\'ebat constructif qui a lieu au tableau ;
\item \emph{\'evaluer le d\'ebat} et \'eventuellement le prolonger s'il y reste des choses int\'eressantes \`a discuter ;
\end{itemize}
le tout devant un jury form\'e de chercheurs, professeurs, doctorants et \'etudiants de master ou de grandes \'ecoles.

Outre leur prestation orale, les \'el\`eves sont aussi incit\'es \`a \'evaluer les solutions \'ecrites par leurs camarades. Ils r\'edigent pour cela des notes de synth\`ese sur la solution qui va \^etre pr\'esent\'ee. Celles-ci sont \'egalement \'evalu\'ees par le jury.

Les solutions sont soumises une semaine avant chaque tournoi. Peu apr\`es, un tirage au sort a lieu pendant lequel se d\'ecide la formation des poules, les probl\`emes qui seront pr\'esent\'es et l'ordre des pr\'esentations.\\

\subsection{Une rencontre avec la recherche}
On voit ainsi que, \`a la diff\'erence des math\'ematiques que les participants rencontrent au lyc\'ee et dans d'autres comp\'etitions, ce tournoi leur propose une vraie rencontre avec la recherche en math\'ematiques. En effet :

\begin{enumerate}
\item Les probl\`emes propos\'es sont compl\`etement ouverts puisque personne n'en conna\^it la solution g\'en\'erale, pas m\^eme leurs auteurs. Les \'el\`eves doivent donc chercher par eux-m\^emes en ayant droit \`a toutes les ressources possibles, y compris des r\'esultats interm\'ediaires trouv\'es dans des livres, voire sur internet. En cela, ils ont la m\^eme libert\'e qu'un chercheur dans son laboratoire face \`a ses probl\`emes math\'ematiques quotidiens.

\item Le travail se fait par \'equipes. On peut alors travailler \`a plusieurs sur un m\^eme probl\`eme, en r\'epartissant les t\^aches en fonction des comp\'etences de chacun. On peut se relire les uns les autres et on peut aussi travailler sur plusieurs probl\`emes \`a la fois. Les probl\`emes recouvrant divers domaines des math\'ematiques, l'\'equipe enti\`ere simule ainsi un ``mini-laboratoire'' de recherche !

\item Le r\^ole de l'encadrant est \`a mettre en parall\`ele avec celui d'un directeur de th\`ese. En effet, il est l\`a pour aider les \'el\`eves lorsqu'ils se trouvent dans une impasse, pour leur donner des outils qui les aideront \`a trouver la solution et pour leur donner des pistes de recherche, sans toutefois leur donner directement la solution.

\item Les d\'ebats qui ont lieu au tableau lors du tournoi n'ont rien \`a envier aux discussions que les chercheurs ont tous les jours dans un laboratoire de math\'ematiques. En effet, il s'agit de quelqu'un qui pr\'esente ses r\'esultats de mani\`ere concise et de son interlocuteur qui lui fait remarquer l\`a o\`u il a \'et\'e impr\'ecis, tout comme un chercheur qui pr\'esente ses travaux au coll\`egue du bureau voisin pour \^etre s\^ur que tout semble correct.

\item Le travail de relecture des solutions des autres \'equipes est similaire au travail des rapporteurs d'articles de recherche. De plus, le fait de lire d'autres solutions aide les \'el\`eves \`a comprendre ce qui a pu manquer en p\'edagogie dans leurs propres textes. C'est \'egalement dans cette dynamique, en lisant de plus en plus d'articles, qu'un chercheur apprend petit \`a petit \`a en \'ecrire de meilleurs.
\end{enumerate}

Le tournoi permet aussi les \'echanges entre \'el\`eves et chercheurs de premier niveau, lesquels participent de fa\c con b\'en\'evole en tant que jurys. Les pr\'esidents de jury interagissent notamment avec les \'el\`eves apr\`es le premier tour de chaque tournoi pour faire un compte-rendu des d\'ebats afin de les faire progresser dans la pr\'esentation de r\'esultats scientifiques. En outre, depuis 2013, les \'el\`eves assistent \`a des expos\'es donn\'es par des chercheurs lors des c\'er\'emonies d'ouverture ou de cl\^oture.\\

\subsection{Les participants}
Bien que le r\^eve des organisateurs du \tfjm\, soit de donner une vraie exp\'erience de recherche \`a tous les \'el\`eves de lyc\'ee, il faut avouer que ce tournoi s'adresse aux meilleurs et plus motiv\'es d'entre eux. En effet, afin de pouvoir mener \`a bien une r\'eflexion pouss\'ee il est n\'ecessaire que ces \'el\`eves soient suffisamment \`a l'aise avec les connaissances math\'ematiques que requiert le programme de lyc\'ee. Le format des \'enonc\'es est d\'ej\`a un d\'efi important en soi d\^u \`a l'usage d'un langage math\'ematique qui est parfois un peu lourd, m\^eme si tout est fait pour simplifier au maximum les concepts et ne pas alourdir les notations. Mais cela fait parti du d\'efi : ils ont du temps pour dig\'erer ces nouveaux concepts, d\'efinitions et notations, \'eventuellement avec l'aide de leurs encadrants. D'ailleurs, ce sont souvent ces probl\`emes qui sont les plus plebiscit\'es par les \'el\`eves car ceux qui ont un \'enonc\'e simple sont souvent les plus difficiles, tout comme en recherche !\\

Le tournoi s'adresse donc \`a tous les lyc\'eens de premi\`ere ou terminale scientifique ayant un bon niveau en math\'ematiques mais surtout une grande curiosit\'e et la volont\'e d'aller chercher un peu plus loin que le programme scolaire. Ainsi, dans chaque lyc\'ee on pourrait constituer une \'equipe avec les meilleurs \'el\`eves de premi\`ere ou terminale, encadr\'es par un ou deux professeurs. On notera que l'on a parfois vu des participants plus jeunes (en seconde, voire au coll\`ege) faire \'equipe avec des camarades plus \^ag\'es, et \'egalement des \'el\`eves qui n'\'etaient pas en fili\`ere scientifique mais n\'eanmoins tr\`es int\'eress\'es par les math\'ematiques. 

Parfois, certains \'etablissements poss\`edent d\'ej\`a un club de math\'ematiques anim\'e par un enseignant et ce sont les \'el\`eves du club qui forment une \'equipe. Le \tfjm\, fournit ainsi \`a ces clubs du contenu pour travailler pendant toute la premi\`ere moiti\'e du deuxi\`eme s\'emestre. Ces lyc\'eens se r\'eunissent ensuite une fois par semaine avec les professeurs pour \'echanger sur leurs avanc\'ees, et \'echangent entre eux par mail, pendant les trois mois qui pr\'ec\`edent le tournoi (la fr\'equence peut s'intensifier \`a l'approche du tournoi).\\

Ainsi, le public du tournoi ne se restreint pas aux \'etablissements les plus r\'eput\'es, bien que ceux-ci restent encore sur-repr\'esent\'es. Il existe de nombreux exemples d'\'equipes d'\'etablissements pas particuli\`erement r\'eput\'es qui ont r\'ealis\'e de tr\`es bonnes performances. L'ouverture des tournois r\'egionaux a permis d'accueillir un public plus large et nous souhaitons continuer dans ce sens. Certaines \'equipes plus faibles s'attaquent aux probl\`emes avec leurs propres moyens, souvent moins th\'eoriques et plus exp\'erimentaux, mais vont parfois bien loin dans le tournoi. Cela est d\^u notamment au fait que les \'el\`eves s'approprient tr\`es bien les quelques concepts qu'ils ont pu traiter et ont d\'evelopp\'e de bonnes capacit\'es p\'edagogiques mises en valeur lors du d\'ebat oral.

Le principal facteur limitant reste certainement l'investissement horaire important que le tournoi demande de la part des encadrants. Ce sont en effet des professeurs qui d\'ecident d'encadrer b\'en\'evolement leurs \'el\`eves et de sacrifier plusieurs soir\'ees et week-ends, souvent sans reconnaissance de leur hi\'erarchie.\\

\subsection{Le \tfjm\, en chiffres}

\subsubsection{Un tournoi en pleine croissance}
Apr\`es un premier tournoi en 2011 avec seulement 17 participants, le \tfjm\, a atteint un cap en 2014 avec 106 participants. Ceci a motiv\'e la cr\'eation des tournois r\'egionaux en 2015, lesquels rassemblent depuis plus de 150 participants. Le nombre de tournois n'a cess\'e d'augmenter : quatre tournois r\'egionaux en 2015, puis cinq en 2016 et six en 2017. L'objectif \`a long terme serait de parvenir \`a en organiser un par acad\'emie.\\

\subsubsection{Un tournoi ouvert \`a tous}
L'\'edition 2016 du tournoi, avec ses cinq tournois r\'egionaux, a r\'euni des participants scolaris\'es dans 16 acad\'emies distinctes, de Nantes \`a Montpellier, en passant par Nancy-Metz. Ceci est possible gr\^ace au format du tournoi, qui permet de travailler en temps libre et \`a distance. D'ailleurs, des \'equipes mixtes inter-d\'epartamentales participent chaque ann\'ee.

Le \tfjm\, se veut \'egalement de plus en plus mixte avec une part de math\'ematiciennes toujours croissante : de 12\% de filles en 2012, ce pourcentage est en augmentation constante et a atteint 23\% en 2016 (l'ann\'ee 2011 n'est pas repr\'esentative compte tenu du nombre total de participants).

\begin{center}
\includegraphics[width=.4\textwidth]{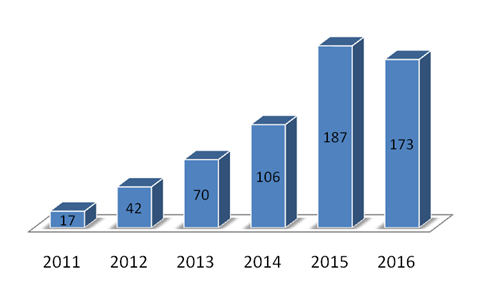}
\hspace{1cm} \includegraphics[width=.4\textwidth]{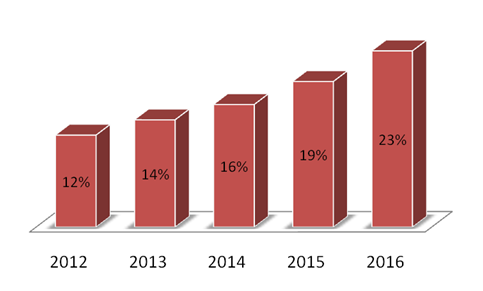}\\
Nombre total de participants et pourcentage de participantes par ann\'ee.
\end{center}

\end{document}